\def \<{\langle}
\def \>{\rangle}
\newcommand{\R}{\mathbb{R}}
\newcommand{\calR}{\mathcal{R}}
\newcommand{\tf}{\widetilde{d}}
\newcommand{\eps}{\varepsilon}
\newcommand{\la}{\langle}
\newcommand{\ra}{\rangle}
\renewcommand{\prod}[2]{\la\,#1 , #2\,\ra}
\newcommand{\ip}[2]{\ensuremath{\langle #1,#2\rangle}}
\newcommand{\dfq}{\partial_q\tf_\rho}
\newcommand{\dfy}{\partial_y\tf_\rho}
\newcommand{\nb}{\nu(M^{p_0})}
\DeclareMathOperator{\dist}{dist}
\newtheorem{thm}{Theorem}[section]
\newtheorem{cor}[thm]{Corollary}
\newtheorem{lem}[thm]{Lemma}
\newtheorem{theorem}{Theorem}
\theoremstyle{definition}
\newtheorem{rem}[thm]{Remark}
\newtheorem*{ack}{Acknowledgements}
\numberwithin{equation}{section}
\begin{document}


\author[Galaz-Garc\'ia]{Fernando Galaz-Garc{\'\i}a$^*$}
\address[Galaz-Garc\'ia]{Institut f\"ur Algebra und Geometrie, Karlsruher Institut f\"ur Technologie (KIT), Germany.}
\email{galazgarcia@kit.edu}
\thanks{$^{*}$Supported in part by SFB 878: \emph{Groups, Geometry and Actions} at the University of M\"unster.}


\author[Guijarro]{Luis Guijarro$^{**}$}
\address[Guijarro]{Department of Mathematics, Universidad Aut\'onoma de Madrid, and ICMAT CSIC-UAM-UCM-UC3M, Spain.}
\email{luis.guijarro@uam.es}
\thanks{$^{**}$Supported by research grants  MTM2011-22612 from the Ministerio de Ciencia e Innovaci\'on (MCINN) and MINECO: ICMAT Severo Ochoa project SEV-2011-0087.}

\title{Every point in a Riemannian manifold is critical}
\date{\today}


\subjclass[2010]{53C20,53C22} 
\keywords{distance function, critical point, geodesic loop}


\begin{abstract} 
We show that for any point $p$ in a closed Riemannian manifold $M$, there exists at least one point $q\in M$ such that $p$ is critical for the distance function from $q$. We also show that such a point $q$  cannot always be reached with geodesic loops based at $q$ with midpoint $p$. 
\end{abstract}

\maketitle


\section{Main results}
Critical point theory has been of central importance in many areas of mathematics. In Riemannian geometry, however, the most natural functions are distance functions and, due to their possible lack of differentiability at the cut locus, it was not clear for some time what a critical point should be. This situation was corrected in \cite{GroShio}, where a point $p$ in a Riemannian manifold $M$ was considered to be \emph{critical} for $q$ if the set of tangent vectors  to minimal geodesics connecting $q$ to $p$ forms a $\pi/2$-net  at the unit tangent sphere at $p$.  

The surveys \cite{Chee} and \cite{Gro}, as well as \cite[Chapter 11]{Pet}, make plain the importance of critical point theory for distance functions and  its use in giving unified conceptual proofs of many of the main results in Riemannian geometry. Nevertheless, it is rare to see critical points studied by themselves. An exception to this is \cite{BIVZ}, where the authors prove that any point $p$  in a closed Alexandrov surface has to be critical for some point $q$. The purpose of this note is to give a short proof of this fact  for closed Riemannian manifolds of arbitrary dimension. Our proof, however, does not carry over to Alexandrov spaces.


\begin{theorem}
\label{T:Smooth}
Let $M$ be a closed Riemannian manifold. Then, for any point $p\in M$, there exists at least one point $q\in M$ such that $p$ is critical for the distance function from $q$.
\end{theorem}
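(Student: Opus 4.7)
My plan is to argue by contradiction using a degree-theoretic obstruction built from the set of initial directions of minimizing geodesics at $p$.

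Write $\Gamma_p(q) \subset S_pM$ for the set of initial velocities at $p$ of minimizing geodesics from $p$ to $q$. A short duality argument, together with the symmetry $v \leftrightarrow -v$, identifies the Grove--Shiohama $\pi/2$-net condition with the statement that $\Gamma_p(q)$ is not contained in any open hemisphere of $S_pM$---equivalently, that $0$ lies in the convex hull of $\Gamma_p(q)$ inside $T_pM$. So it suffices to produce some $q$ with this property. Suppose, for contradiction, that no such $q$ exists; then the escape set
\[ V(q) \;:=\; \{\, v \in S_pM : \ip{v}{u} > 0 \text{ for all } u \in \Gamma_p(q)\,\} \]
is a non-empty, open, geodesically convex subset of $S_pM$ for every $q \in M \setminus \{p\}$. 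Upper semicontinuity of $\Gamma_p$ makes $V$ a lower semicontinuous multifunction; a Michael-style selection, applied to the corresponding open convex cones in $T_pM$ and then normalized, produces a continuous map $\nu \colon M \setminus \{p\} \to S_pM \cong S^{n-1}$ with $\nu(q) \in V(q)$ for all $q$.

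Next I would compute the degree of $\nu$ on the small geodesic sphere $\Sigma_\varepsilon := \exp_p(\varepsilon \, S_pM)$ sitting inside the injectivity ball of $p$. For $q = \exp_p(\varepsilon u) \in \Sigma_\varepsilon$ one has $\Gamma_p(q) = \{u\}$, so $\nu(q)$ lies in the open hemisphere centered at $u$; the straight-line homotopy in $T_pM$ never leaves that open half-space, and therefore $\nu|_{\Sigma_\varepsilon} \colon S^{n-1} \to S^{n-1}$ is homotopic to the identity and has degree $1$. On the other hand, $\nu$ extends continuously to the compact manifold with boundary $M' := M \setminus B_\varepsilon(p)$, whose boundary is $\Sigma_\varepsilon$. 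From the long exact sequence of $(M', \Sigma_\varepsilon)$ one reads $[\Sigma_\varepsilon] = 0$ in $H_{n-1}(M'; \Z)$ (use $\Z/2$ coefficients if $M$ is non-orientable), so $\nu_*[\Sigma_\varepsilon] = 0$ and the degree of $\nu|_{\Sigma_\varepsilon}$ must vanish---a contradiction.

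The step I expect to be the main obstacle is the continuous selection producing $\nu$. One must either invoke a version of Michael's theorem adapted to open convex cones in $\R^n$, or construct $\nu$ by hand---for example, by normalizing a suitably weighted barycenter of $\Gamma_p(q)$ inside $T_pM$---and carefully handle the upper-semicontinuous jumps of $\Gamma_p$ across the cut locus $\mathrm{Cut}(p)$. Once $\nu$ is in hand, the remaining algebraic topology is standard.
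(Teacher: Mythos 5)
Your argument is correct, and it reaches the contradiction by a genuinely different route from the paper. The paper mollifies the distance function twice (once in each variable, following Grove--Shiohama) so as to obtain a \emph{smooth} section of the trivial normal bundle of $M\times\{p\}$ inside $M\times M$; non-criticality makes this section nonvanishing away from $(p,p)$, while near $(p,p)$ it is arranged to have an isolated zero of nonzero index, contradicting the vanishing of the Euler class. Your version dispenses with the smoothing entirely: you only need a \emph{continuous} unit vector $\nu(q)\in T_pM$ for $q\neq p$, and you replace the Euler class/index count by the equivalent statement that a degree-one map on the small geodesic sphere $\Sigma_\varepsilon$ cannot factor through $M\setminus B_\varepsilon(p)$, where $[\Sigma_\varepsilon]$ is null-homologous. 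The underlying obstruction is identical (the index of an isolated zero of a section of a trivial bundle is the degree of that section on a linking sphere), but your packaging is shorter and avoids the analytic overhead; what the paper's smoothing buys is the extension, noted in their closing remark, to arbitrary smooth functions $f\geq 0$ vanishing exactly on the diagonal.

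On the step you flag: the selection does go through, by the standard partition-of-unity construction of gradient-like fields (with the roles of the base point and the variable point interchanged). For fixed $w\in S_pM$, the set $\{q\in M\setminus\{p\}: \ip{w}{u}>0 \ \text{for all}\ u\in\Gamma_p(q)\}$ is open: otherwise one finds $q_i\to q$ and $u_i\in\Gamma_p(q_i)$ with $\ip{w}{u_i}\le 0$, and upper semicontinuity of $\Gamma_p$ produces a limit $u\in\Gamma_p(q)$ with $\ip{w}{u}\le 0$. Under your standing assumption these sets cover $M\setminus\{p\}$; averaging the vectors $w_\alpha$ against a subordinate partition of unity yields, at each $q$, a point of the open convex cone spanned by $V(q)$, hence a nonzero vector that can be normalized. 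The only caution concerns your parenthetical alternative: a weighted barycenter of $\Gamma_p(q)$ itself is the wrong object---it lies in the convex hull of $\Gamma_p(q)$ rather than in $V(q)$, and it jumps across the cut locus. The averaging must be performed over chosen vectors in $V(q)$, as above. With that reading, the proof is complete.
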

Observe that this theorem is no longer true in the non-compact case. For example, in Euclidean space no point is critical for any point.  A naive approach  to proving Theorem~\ref{T:Smooth} would be to look for the point $q$ among the points at maximal distance from $p$. This, however, fails, as  shown for the surface of revolution in Figure~\ref{F:Ellipsoid}. The point $q$ farthest away from $p$  is critical for the distance function from $p$ but, clearly, $q$ is not critical for $p$, as pointed out to us by J.~Itoh.


\begin{figure}
\label{F:Ellipsoid}
\begingroup%
  \makeatletter%
  \providecommand\color[2][]{%
    \errmessage{(Inkscape) Color is used for the text in Inkscape, but the package 'color.sty' is not loaded}%
    \renewcommand\color[2][]{}%
  }%
  \providecommand\transparent[1]{%
    \errmessage{(Inkscape) Transparency is used (non-zero) for the text in Inkscape, but the package 'transparent.sty' is not loaded}%
    \renewcommand\transparent[1]{}%
  }%
  \providecommand\rotatebox[2]{#2}%
  \ifx\svgwidth\undefined%
    \setlength{\unitlength}{276.46303689bp}%
    \ifx\svgscale\undefined%
      \relax%
    \else%
      \setlength{\unitlength}{\unitlength * \real{\svgscale}}%
    \fi%
  \else%
    \setlength{\unitlength}{\svgwidth}%
  \fi%
  \global\let\svgwidth\undefined%
  \global\let\svgscale\undefined%
  \makeatother%
  \begin{picture}(1,0.47504526)%
    \put(0,0){\includegraphics[width=\unitlength]{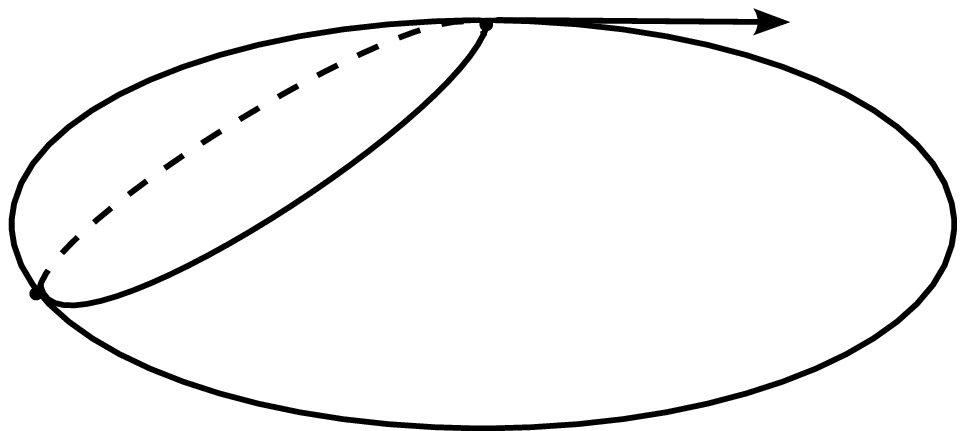}}%
    \put(0.50906515,0.45521959){\makebox(0,0)[lb]{$p$}}%
    \put(-0.00091796,0.10909302){\makebox(0,0)[lb]{$q$}}%
    \put(0.8054782,0.45706107){\makebox(0,0)[lb]{$v$}}%
  \end{picture}%
\endgroup%
\caption{The point $q$ is critical for $p$ but not viceversa: the tangent vector $v$ forms an angle greater than $\pi/2$ with the two geodesics connecting $q$ to $p$.}
\end{figure}


Once we know that any point $p\in M$ is critical for some $q$, a natural question arises: \emph{What type of critical point is $p$?} Among critical points, the simplest situation corresponds to midpoints of geodesic loops from $q$. For this case, there are exactly two minimal geodesics from $q$ to $p$ forming an angle $\pi$ at $p$. The criticality condition is clearly satisfied, since any point in the tangent sphere at $p$ is contained in one of the two closed hemispheres that the two geodesics determine at $p$. It is then interesting to know if this is always the case, namely: \emph{Is it true that for any Riemannian metric on $M$, every point is the midpoint of a geodesic loop with the property that half of the loop is minimal?} We thank Burkhard Wilking for bringing this question to our attention, and we give a negative answer to it.  


\begin{theorem}
\label{T:Loops} Let $M$ be a closed smooth manifold. Then there exists a Riemannian metric on $M$ such that not every point is the midpoint of a geodesic loop with the property that half the loop is minimal.
\end{theorem}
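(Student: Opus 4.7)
The plan is to build a metric exhibiting three-fold rotational symmetry around a chosen point $p$, so that the critical configuration of $p$ (guaranteed by Theorem~\ref{T:Smooth}) consists of three minimal geodesics from a common endpoint arriving at $p$ with tangent vectors at mutual angles $2\pi/3$. Such a triple forms a $\pi/2$-net on the unit tangent sphere at $p$, but no two of its vectors are antipodal, so $p$ is not the midpoint of a minimal geodesic loop based at that endpoint; the remaining task is to ensure that no antipodal pair arises for any other endpoint either.

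I first work in dimension two. Let $\Sigma_0$ be the triangular pillow, i.e.\ the double of a flat equilateral triangle $T$ along its boundary: a singular $2$-sphere with three cone singularities of angle $2\pi/3$ and dihedral symmetry $D_3$. Write $p$ and $p^*$ for the two fixed points of the $\mathbb{Z}_3$-rotation, i.e.\ the centers of the two triangles. Unfolding across the three edges of $T$ shows that $p^*$ has exactly three shortest pre-images at the distance $d(p,p^*)$, permuted by $\mathbb{Z}_3$, whose corresponding tangent vectors at $p$ are at $2\pi/3$ angles; in particular $p$ is critical for $p^*$. The same unfolding argument identifies the cut locus of $p$ with the $D_3$-symmetric tripod having hub $p^*$ and arms terminating at the three cone points. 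For $q$ in the interior of an arm, the two minimal geodesics from $p$ to $q$ are swapped by the reflection of $D_3$ fixing that arm, so their tangent vectors at $p$ are mirror images across the corresponding reflection axis.

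The decisive computation is to verify that along each arm the angle between these two mirror-symmetric tangent vectors at $p$ varies monotonically from $2\pi/3$ (at $q=p^*$) down to $0$ (at the cone-point endpoint), and in particular never reaches $\pi$. Parametrizing an arm by $\delta\in[0,1/\sqrt 3]$, one of the two tangent directions at $p$ can be computed in the unfolded picture to equal $\arctan\bigl((\delta/2+1/(2\sqrt 3))/(1/2-\sqrt 3\,\delta/2)\bigr)$, which is manifestly monotone and ranges over $[\pi/6,\pi/2]$; the other direction is its mirror image, so the opening angle traverses $[0,2\pi/3]$. Smoothing the three cone singularities $D_3$-equivariantly with arbitrarily small support preserves the tripod cut locus and, by continuity, the angular bound, producing a smooth metric $g$ on $S^2$ in which $p$ is critical for $p^*$ but is not the midpoint of any minimal geodesic loop.

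To promote the example to an arbitrary closed smooth $n$-manifold $M$ (with $n\geq 2$), the plan is to implant the smoothed pillow as a \emph{cap}. Fix a background metric on $M$, remove a small embedded $n$-disk, and glue in its place a scaled copy of the smoothed pillow (or, in higher dimensions, its warped product with a small round $S^{n-2}$) along a circle sitting far from $p$, scaling so that every geodesic from $p$ which exits the cap is substantially longer than any internal path. Then minimal geodesics from $p$ to points outside the cap must leave $p$ in a narrow cone of tangent directions (those pointing ``down the cap''), and no two such geodesics can be antipodal at $p$; pairs of minimal geodesics to points inside the cap are controlled by the two-dimensional analysis above. The main obstacle I anticipate is the honest monotonicity computation in the flat pillow and its control under the equivariant smoothing---this is delicate because generic perturbations of a Riemannian $2$-sphere \emph{do} create midpoints of minimal loops.
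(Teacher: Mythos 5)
Your two--dimensional construction is correct and takes a genuinely different route from the paper's. The paper invokes Gluck--Singer's scattering of geodesic fields to produce a smooth metric on $\mathbb{S}^n$ in which the cut locus of the south pole is a tripod whose edges meet at the north pole at angles $2\pi/3$; you produce the same qualitative picture from an explicit flat model. Your unfolding computation on the doubled equilateral triangle is right: the two minimal geodesics from a point on an arm of the tripod arrive at $p$ with opening angle decreasing monotonically from $2\pi/3$ at the hub to $0$ at the cone point, so no cut point of $p$ sees an antipodal pair at $p$. The smoothing step is the delicate point you flag, but it can be closed: one does not need the cut locus to be preserved, only the upper semicontinuity of $\sup_q\max\angle_p(\gamma_1'(0),\gamma_2'(0))$ over pairs of minimizers with common endpoint; since no shortest path of the pillow passes through a cone point of angle $2\pi/3$, minimizers for the smoothed metrics converge to minimizers of the pillow as the smoothing support shrinks, and the bound $2\pi/3<\pi$ persists. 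Your tube argument for a general surface is the same as the paper's (which cites Gluck--Singer, Section 8); it additionally requires the gluing disk to be small and centered at a point with a unique minimal geodesic from $p$, so that initial directions of minimizers leaving the cap lie in a narrow cone --- worth saying explicitly, since placing the disk at the hub $p^*$ would ruin this.

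The genuine gap is the case $n\geq 3$. The warped product of the smoothed pillow $\Sigma$ with $S^{n-2}$ is not a cap: after removing a disk from $\Sigma$ its boundary is $S^1\times S^{n-2}$, not $S^{n-1}$, so gluing it to $M$ ``along a circle'' is a surgery that changes the diffeomorphism type of $M$, whereas the theorem asks for a metric on the given closed manifold. Moreover, in such a warped product the distinguished point becomes the submanifold $\{p\}\times S^{n-2}$, and for an actual point of the $n$-manifold the minimal geodesics and their angles do not reduce to the two-dimensional analysis (there are new directions tangent to the $S^{n-2}$ factor, and new pairs of minimizers wrapping around it that can well be antipodal). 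To handle $n\geq 3$ you need either the Gluck--Singer construction carried out directly on $\mathbb{S}^n$, as in the paper, or an honestly $n$-dimensional substitute for the pillow (for instance the double of a flat $n$-simplex, whose singularities along the codimension-two skeleton are considerably harder to smooth while keeping control of the cut locus). As it stands, your proof establishes the theorem only for surfaces.
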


We prove  Theorem~\ref{T:Smooth} in Section~\ref{S:Proof_Smooth}. We prove Theorem~\ref{T:Loops} in Section~\ref{S:Loops}, using a construction introduced by Gluck and Singer \cite{GS} to construct Riemannian manifolds with prescribed cut locus.


\begin{ack}The authors would like to thank Burkhard Wilking, for a critical observation, the Posgrado de Excelencia Internacional en Matem\'aticas at the Universidad Aut\'o\-noma de Madrid, where the work contained in this article was initiated, and the Mathematisches Forschungsinstitut Oberwolfach, for its hospitality and its pleasant work environment. Luis Guijarro would also like to thank Jin-ichi Itoh, for bringing to his attention the work in \cite{BIVZ}.
\end{ack}


\section{Proof of Theorem~\ref{T:Smooth}}
\label{S:Proof_Smooth}

Let $M$ be a closed Riemannian manifold. We will prove the theorem by contradiction. Thus,  we will assume that there exists some point $p_0$ in $M$ that is not critical for any other point in $M$.  Consider the submanifold $M^{p_0}=M \times \{p_0\} $ inside the product $M\times M$. We will  construct a section of the normal bundle $\nb$ of $M^{p_0}$ with an isolated zero of non-zero index. Since the normal bundle is trivial, we derive a contradiction by looking at its Euler class (cf.~\cite{BT}).

We would like to define the section of $\nb$ at a point $(q,p_0)\in M^{p_0}$ as the gradient vector at $p_0$ of the distance function $d_q$ from $q$. To do so, we will use the non-criticality of $p_0$ with respect to $q$. Because distance functions are not everywhere differentiable, we need to smooth out the function $d_q$ first. After doing this, we must modify the section so that it depends smoothly on $q$. Therefore, we need to smooth out the distance function on $M$ with respect to each one of its two variables. To achieve this, we apply the smoothing technique for distance functions as appearing, for instance, in \cite{GroShio}. We divide the proof in seven steps.
\\


\noindent\textbf{Step 1.} Choose some function $\phi:[0,\infty)\to [0,1]$ with support contained in $[0,1)$, $\phi'\leq 0$, and $\phi\equiv 1$ 
at points close to $0$. 
\\


\noindent\textbf{Step 2.} Let $i(M)$ be the injectivity radius of $M$ and fix $p\in M$. Given $\rho < i(M)$,  denote by $d\mu_p$ the measure on  $B_\rho(p)$ induced by $\exp_p$ and the Lebesgue measure on $B_\rho(0)$.
\\


\noindent\textbf{Step 3.}  Define a $\rho$-mollifier kernel as the map $\Phi_\rho:M\times M\to \R$, given by

\[
 \Phi_\rho(x,y)=\left(\int_{B_\rho(x)}\,\phi\left(\frac{1}{\rho} d(x,\cdot)\right)\,d\mu_x\right)^{-1}\cdot 
\phi\left(\frac{1}{\rho}d(x,y) \right).
\]

It is clear that $\Phi_{\rho}$ is smooth, symmetric in $x$ and $y$, and that the first factor in the above expression is independent of the $x$ used. 
\\
   

\noindent\textbf{Step 4.}  By \cite[Proposition~2.1]{GroShio}, the function
\[
d_{q,\rho}(y):=\int_{B_\rho(y)}\,d_q(z)\, \Phi_\rho(y,z)\,d\mu_y(z)
\] 
is smooth with respect to $y$. We need to improve this to obtain smoothness with respect to $q$. 

We now consider the point $q$ as a variable of $d_{q,\rho}(y)$. After a second smoothing, we obtain the function $\tf_\rho:M\times M\to\R$ given by
\[
\tf_\rho(q,y):=\int_{p\in B_\rho(q)}\,d_{p,\rho}(y)\,\Phi_\rho(q,p)\,d\mu_q(p).
\]
This expression can be expanded into 

\begin{equation}
\label{Eq:Dbl_smooth}
\tf_\rho(q,y)=\int_{p\in B_\rho(q)}\,\int_{z\in B_\rho(y)}\,d(p,z)\Phi_{\rho}(y,z)\Phi_\rho(q,p)\, d\mu_y(z)\,d\mu_q(p),
\end{equation}

\noindent where, to facilitate the reading, we have incorporated into each measure the variable with respect to which we integrate. Equation~\eqref{Eq:Dbl_smooth} above can be interpreted as the smoothing of the function $d:M\times M\to \R $ after convolution with a product mollifier kernel. Therefore, the function $\tf_{\rho}$ is smooth.
\\


\noindent\textbf{Step 5.}  
Let $u\in T_pM$. Following \cite[p.~208]{GroShio}, we construct a smooth vector field $U$ on $B_{\rho}(p)$ as follows. Let $\gamma$ be the unique geodesic with $\gamma'(0)=u$ and define for each $y\in B_\rho(p)$ a smooth curve $\gamma_y$ by 
\[
\gamma_y(t)=\exp_{\gamma(t)}\big(P_{\gamma(t)}\big(\exp^{-1}_{\gamma(0)}(y)\big)\big).
\]  
We then set $U_y=\gamma'(0)$.

Recall that the distance function $d:M\times M\to \R$ is smooth almost everywhere with bounded gradient. The remark before \cite[Theorem~2.3]{GroShio} implies the following two lemmas:
 

\begin{lem}
Let $u_1\in T_qM$, $u_2\in T_xM$ and let $U_1$, $U_2$ be the vector fields constructed on $B_\rho(q)$, $B_\rho(y)$, respectively, from $u_1$ and $u_2$ as indicated above.Then 
\begin{multline}
\ip{\nabla\tf_\rho(q,y)}{(u_1,u_2)}= \\
=\int_{M\times M}\ip{\nabla d}{(U_1,U_2)}(p,z)
\Phi_{\rho}(y,z)\Phi_\rho(q,p)\, d\mu_y(z)\,d\mu_q(p).
\end{multline}
\end{lem}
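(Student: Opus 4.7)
My plan is to prove the lemma by differentiation under the integral sign, using the parallel-transport construction of the vector fields $U_i$ precisely to cope with the fact that the integration domain $B_\rho(q)\times B_\rho(y)$ in \eqref{Eq:Dbl_smooth} moves when $(q,y)$ varies.

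First, I would pull the integrals in \eqref{Eq:Dbl_smooth} back to fixed balls in the tangent spaces via the exponential maps. By Step~3, the normalizing factor in $\Phi_\rho$ is base-point independent, so we may factor it out as a constant $C(\rho)$ and rewrite
\[
\tf_\rho(q,y) = C(\rho)^2 \iint d(\exp_q v,\exp_y w)\, \phi(|v|/\rho)\, \phi(|w|/\rho)\,dv\,dw,
\]
with $v\in B_\rho(0)\subset T_qM$ and $w\in B_\rho(0)\subset T_yM$. Letting $\gamma_i$ be the geodesic with $\gamma_i'(0)=u_i$ and $P^i_t$ parallel transport along $\gamma_i$, the substitutions $v\mapsto P^1_tv$ and $w\mapsto P^2_tw$ (which preserve the mollifier factors and the volume elements, since the $P^i_t$ are linear isometries) transform the integration into one over the fixed domains $B_\rho(0)\subset T_qM$ and $B_\rho(0)\subset T_yM$:
\[
\tf_\rho(\gamma_1(t),\gamma_2(t)) = C(\rho)^2 \iint d\bigl(\exp_{\gamma_1(t)}(P^1_tv),\exp_{\gamma_2(t)}(P^2_tw)\bigr)\,\phi(|v|/\rho)\,\phi(|w|/\rho)\,dv\,dw.
\]

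Next, I would differentiate at $t=0$ under the integral sign. By the very definition of $U$ in Step~5, the curves $t\mapsto \exp_{\gamma_1(t)}(P^1_tv)$ and $t\mapsto \exp_{\gamma_2(t)}(P^2_tw)$ have initial velocities $U_1(p)$ and $U_2(z)$ at the points $p=\exp_qv$ and $z=\exp_yw$ respectively. Hence, at almost every $(p,z)$ where $d$ is differentiable, the chain rule gives
\[
\frac{d}{dt}\bigg|_{t=0}\, d\bigl(\exp_{\gamma_1(t)}(P^1_tv),\exp_{\gamma_2(t)}(P^2_tw)\bigr)=\ip{\nabla d}{(U_1,U_2)}(p,z).
\]
Reverting the change of variables back to integration against $d\mu_q\,d\mu_y$ over $B_\rho(q)\times B_\rho(y)$, restoring the $\Phi_\rho$ factors, and extending trivially to $M\times M$ using the compact support of $\phi$ then yields the asserted identity.

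The main obstacle is justifying the exchange of derivative and integral, given that $d$ is only smooth almost everywhere. This is resolved exactly as in the remark preceding \cite[Theorem~2.3]{GroShio}: since $d$ has gradient bounded by $1$ wherever it is defined, the difference quotients are uniformly bounded on the compact domain of integration, and dominated convergence legitimates passing the derivative inside the integral.
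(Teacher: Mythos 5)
Your argument is correct and is essentially the proof the paper has in mind: the paper itself offers no proof of this lemma beyond citing the remark before \cite[Theorem~2.3]{GroShio}, and your computation (pulling the integrals back to fixed balls in the tangent spaces via the isometries $P^i_t$, recognizing the curves $t\mapsto\exp_{\gamma_i(t)}(P^i_tv)$ as the integral curves defining $U_i$ from Step~5, and passing the $t$-derivative inside by dominated convergence using the Lipschitz bound on $d$) is precisely the standard argument behind that citation. The only cosmetic points are that $u_2$ should live in $T_yM$ (the statement's $T_xM$ is a typo you silently and correctly fix) and that the normalizing constant should appear as $C(\rho)^{-2}$ or $C(\rho)^{2}$ depending on which factor you name $C(\rho)$.
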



\begin{lem}
\label{L:close}
Let $(q,y)\in M\times M$.
For any $\eps>0$, there exists $\rho<i(m)$ such that 
\begin{align}
\left\|(\nabla\tf_\rho)_{(q,y)} - \int_{B_\rho(q)}\int_{B_{\rho}(y)} (P_{q,y}\circ\nabla d)\cdot \Phi_\rho(q,\cdot)\Phi_\rho(y,\cdot)d\mu_yd\mu_q\right\|<\varepsilon.
\end{align}
\end{lem}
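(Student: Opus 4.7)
Fix $(q,y)\in M\times M$ and a unit vector $(u_1,u_2)\in T_qM\oplus T_yM$. The strategy is to start from the integral representation of $\ip{\nabla\tf_\rho(q,y)}{(u_1,u_2)}$ supplied by the previous lemma and to replace the integrand $\ip{\nabla d}{(U_1,U_2)}(p,z)$ by the ``frozen-coefficient'' integrand $\ip{P_{(p,z)\to(q,y)}\nabla d(p,z)}{(u_1,u_2)}$, at the cost of a controlled error.

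The key geometric input comes from the construction of $U$ in Step~5. At the basepoint one has $U_p=u$, and $U$ is built from parallel transport along a radial geodesic composed with an exponential correction. A standard Jacobi-field comparison on the compact manifold $M$ provides a constant $C=C(M)$ such that, whenever $\rho<i(M)$,
\[
\|U_1(p)-P_{q\to p}(u_1)\|\leq C\rho\|u_1\|,\qquad \|U_2(z)-P_{y\to z}(u_2)\|\leq C\rho\|u_2\|,
\]
for every $p\in B_\rho(q)$ and $z\in B_\rho(y)$, where $P_{q\to p}$ denotes parallel transport along the radial geodesic from $q$ to $p$. Combining these with the a.e.~bound $\|\nabla d\|\leq\sqrt{2}$ on $M\times M$, with the isometry property of parallel transport, and with the fact that $P_{(p,z)\to(q,y)}$ is the inverse of $P_{(q,y)\to(p,z)}$, yields the pointwise estimate
\[
\bigl|\ip{\nabla d}{(U_1,U_2)}(p,z)-\ip{P_{(p,z)\to(q,y)}\nabla d(p,z)}{(u_1,u_2)}\bigr|\leq C'\rho
\]
for a.e.~$(p,z)\in B_\rho(q)\times B_\rho(y)$, with $C'$ depending only on $C$.

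Integrating this pointwise estimate against the non-negative product kernel $\Phi_\rho(q,p)\Phi_\rho(y,z)\,d\mu_y(z)\,d\mu_q(p)$, which has total mass one, and invoking the previous lemma gives
\[
\bigl|\ip{(\nabla\tf_\rho)_{(q,y)}-I_\rho(q,y)}{(u_1,u_2)}\bigr|\leq C'\rho,
\]
where $I_\rho(q,y)$ denotes the vector-valued double integral appearing inside the norm in the statement. Taking the supremum over unit vectors $(u_1,u_2)$ produces the desired norm bound, and choosing $\rho$ small enough makes the right-hand side smaller than $\eps$.

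The main obstacle is verifying the $O(\rho)$ comparison between $U$ and radial parallel transport. Since $U$ is defined implicitly through two nested exponential maps and parallel transport, this comparison demands a Jacobi-field computation that tracks the first-order behaviour of $\exp$ along $\gamma$. The argument is standard but slightly delicate; compactness of $M$ supplies the uniform curvature bound from which the uniform constant $C$ is extracted.
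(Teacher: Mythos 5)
Your proposal is correct, and it in fact supplies an argument where the paper gives none: the paper simply attributes this lemma to the remark preceding Theorem~2.3 of Grove--Shiohama, and your proof is the natural way to flesh out that reference, so I count it as the same approach. Two small remarks. First, your comparison between $U$ and radial parallel transport is even better than you claim: the variation $\alpha(t,s)=\exp_{\gamma(t)}\big(sP_{\gamma(t)}(\exp_p^{-1}(y))\big)$ produces a Jacobi field $J$ along the radial geodesic with $J(0)=u$ and $\tfrac{D}{ds}J(0)=0$ (because $P_{\gamma(t)}v$ is parallel in $t$), so the standard comparison with the uniform curvature bound on the compact $M$ gives $\|U_y-P_{p\to y}(u)\|\leq C\rho^{2}\|u\|$, not merely $O(\rho)$; either way the conclusion follows. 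Second, your estimate yields a constant $C'$ depending only on $M$ and hence a choice of $\rho$ uniform in $(q,y)$; this is stronger than the pointwise statement of the lemma and is exactly the uniformity that the subsequent lemma's proof quietly relies on, so it is worth recording.
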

In Lemma~\ref{L:close} above, $P_{q,y}$ denotes parallel transport along the unique geodesic from $(p,z)$ to $(q,y)$, for $(p,z)$ in the domain of integration. 
\\


\noindent\textbf{Step 6.}  Use  $\dfq(q,y)$ and $\dfy(q,y)$, respectively, to denote the components of the gradient  $(\nabla\tf_\rho)_{(q,y)}$ under the splitting 
\[
T_{(q,y)}(M\times M) = T_qM\oplus T_yM.
\]



\begin{lem}
If $p_0$ is not critical for any point $q$ different from $p_0\in M$, then there is $\rho >0$ sufficiently small such that 
$\dfy(q,p_0)\neq 0$ for any point $q\neq p_0$.
\end{lem}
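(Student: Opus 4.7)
The plan is to argue by contradiction. If no such $\rho$ exists, then there are sequences $\rho_n \downarrow 0$ and $q_n \in M \setminus \{p_0\}$ with $\partial_y \tf_{\rho_n}(q_n,p_0) = 0$. Passing to a subsequence by compactness of $M$, I arrange $q_n \to q_\infty$, and split according to whether $q_\infty \neq p_0$ or $q_\infty = p_0$.

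In the first case, the non-criticality of $p_0$ with respect to $q_\infty$ yields a unit vector $w \in T_{p_0} M$ and a constant $\alpha > 0$ with $\ip{w}{\gamma'(\ell)} \geq \alpha$ for every unit-speed minimizing geodesic $\gamma\colon [0,\ell] \to M$ from $q_\infty$ to $p_0$. Since limits of minimizing geodesics are minimizing, the set of these terminal tangents is upper semicontinuous in $q \in M \setminus \{p_0\}$; together with the closeness of parallel transport over short distances, this forces, for $n$ large and every $(p,z) \in B_{\rho_n}(q_n) \times B_{\rho_n}(p_0)$ at which $d$ is differentiable, the inequality $\ip{w}{P_{z,p_0}\nabla_y d(p,z)} \geq \alpha/4$. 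Integrating against the product mollifier kernel and applying Lemma~\ref{L:close} then gives $\ip{w}{\partial_y \tf_{\rho_n}(q_n,p_0)} \geq \alpha/4 - \varepsilon_n$, which is strictly positive for large $n$, contradicting $\partial_y \tf_{\rho_n}(q_n,p_0)=0$.

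In the second case, the previous argument breaks down because $B_{\rho_n}(q_n)$ may straddle $p_0$ and so probes the distance function from every direction. My plan here is to work in normal coordinates at $p_0$, in which the Riemannian distance differs from the Euclidean distance by terms of order three in the coordinates. The Euclidean smoothing takes the form $\tf_\rho^{\mathrm{flat}}(q,y) = h_\rho(q-y)$, where $h_\rho$ is the double convolution of $\|\cdot\|$ with a nonnegative isotropic kernel: a smooth, convex, rotationally symmetric function that is strictly radially increasing off the origin. Hence $\partial_y \tf_\rho^{\mathrm{flat}}(q_n,p_0) = -\nabla h_{\rho_n}(q_n)$ is a nonzero vector parallel to $q_n$, and a perturbation estimate comparing the Riemannian and Euclidean double smoothings exhibits $\partial_y \tf_{\rho_n}(q_n,p_0)$ as this leading term plus a strictly lower-order correction, again contradicting vanishing.

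The main obstacle is this second case: the Riemannian-versus-Euclidean perturbation comparison has to be uniform regardless of how the ratio $d(q_n,p_0)/\rho_n$ behaves along the sequence. I expect to handle this by a rescaling that normalizes $\max\{d(q_n,p_0), \rho_n\}$ to one, reducing the statement to the continuous dependence of the double smoothing on a family of metrics converging $C^\infty$ to the Euclidean metric on the unit ball, after which the flat computation closes the case.
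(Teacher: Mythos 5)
Your first case is essentially the paper's own argument: non-criticality of $p_0$ for $q_\infty$ produces a direction (your $w$, the paper's $X$) making an angle greater than $\pi/2+\delta$ with every vector of $\calR_{p,y}$, uniformly for $(p,y)$ in small balls around $(q_\infty,p_0)$; this gives $\ip{X}{P_{q,p_0}\nabla d_p}\le\tau/2<0$ on the whole domain of integration, and Lemma~\ref{L:close} transfers the bound to $\dfy(q,p_0)$. Note, though, that the paper runs this for each \emph{fixed} $q$, with the radius of the balls (hence $\rho$) depending on $q$, and never confronts the regime $q\to p_0$ inside the lemma; the diagonal is instead dealt with in Step~7, where the section is redefined on a small ball around $(p_0,p_0)$ by a partition of unity. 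So your instinct that a genuinely uniform $\rho$ requires a second case is sound, and there you part ways with the paper.

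The gap is precisely in that second case. The estimate you want --- flat leading term plus strictly lower-order correction --- cannot hold uniformly as $d(q_n,p_0)/\rho_n\to 0$. The flat term $-\nabla h_{\rho_n}(q_n)$ has norm of order $d(q_n,p_0)/\rho_n$ and vanishes at $q_n=p_0$, whereas the Riemannian value on the diagonal does not: by the symmetry of $\tf_\rho$ one has $\dfy(p_0,p_0)=\tfrac12\,\nabla\big(x\mapsto\tf_\rho(x,x)\big)(p_0)$, and $\tf_\rho(x,x)$ is the mollified mean distance between two random points near $x$, whose expansion carries a curvature term of order $\rho^3$ that is non-constant in $x$ for generic metrics. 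Thus for $q_n$ close enough to $p_0$ the ``correction'' dominates your ``leading term.'' Worse, since the derivative of $q\mapsto\dfy(q,p_0)$ at $q=p_0$ is an invertible perturbation of $-\operatorname{Hess}h_\rho(0)\approx-(c/\rho)\,\mathrm{Id}$, the inverse function theorem produces an actual zero of $\dfy(\cdot,p_0)$ at some $q^*\ne p_0$ with $d(q^*,p_0)\sim\rho\,\|\dfy(p_0,p_0)\|$ whenever $\dfy(p_0,p_0)\ne 0$. So Case~2 is not just incomplete: the uniform statement you are trying to establish there fails for generic metrics, and no rescaling can repair it. The correct target is the weaker claim that for every $\eps>0$ there is $\rho$ with $\dfy(q,p_0)\ne 0$ for all $q$ with $d(q,p_0)\ge\eps$ (your Case~1 already gives this by compactness), which is all that the degree argument of Step~7 actually consumes, since the section is replaced near $(p_0,p_0)$ anyway.
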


\begin{proof}
Let $\eps>0$.
Using Lemma~\ref{L:close} at the point $(q,p_0)\in M\times M$, we obtain  
\begin{align}
\label{Eq:Norm}
\left\| (\nabla\tf_\rho)_{(q,p_0)} - \int_{B_\rho(q)}\int_{B_{\rho}(p_0)} (P_{q,p_0}\circ\nabla d)\cdot \Phi_\rho(q,\cdot)\Phi_\rho(p_0,\cdot)d\mu_{p_0}d\mu_q\right\|<\varepsilon.
\end{align}
for some $\rho< i(M)$ small enough.
Now we look at the second component of the vector inside the integral in Equation~\eqref{Eq:Norm}. We may write
\[
\nabla d_{(p,z)}= ((\nabla d_z)(p),(\nabla d_p)(z)),
\]
where $d_z,d_p :M\to \R$ are, respectively, the distance functions from $z$ and $p$. 
 Using that parallel transport in the product $M\times M$ is the product of parallel transport on each factor, we deduce that 
 \[
\left\| \dfy(q,p_0)- \int_{p\in B_\rho(q)}\int_{B_{\rho}(p_0)} (P_{q,p_0}\circ\nabla d_p)\cdot \Phi_\rho(q,p)\Phi_\rho(p_0,\cdot)d\mu_{p_0}d\mu_q\right\|<\eps.
 \]
 
Consider some $q\in M$; since $p_0$ is not critical for $q$, there are balls of radius $\rho$ around $q$ and $p_0$ such that no point in $B_\rho(p_0)$ is critical for any point in $B_\rho(q)$. For any pair of different points $p$, $y$, denote the set of unit vectors at $y$ tangent  to geodesics connecting $p$ to $y$ as $\calR_{p,y}$. Non-criticality implies that for any $p\in B_\rho(q)$, $y\in B_\rho(p_0)$, the set $\calR_{p,y}$ is contained in an open half-space of $T_{y}M$.  As in 
\cite{GroShio} construct a non-zero vector $X(p,y)\in T_{y}M$ forming an angle greater than $\pi/2+\delta$ with every vector in $\calR_{p,y}$ for some $\delta>0$ independent of $p$ and $y$ in the above balls.  Furthermore, the norm of $X$ is bounded below.

This implies that there is some $\tau<0$ such that  $\ip{X}{\nabla d_p}\leq \tau$ for every pair  $(p,y)\in B_\rho(q)\times B_\rho(p_0)$. If $\rho$ is small enough, we can assure that 
$\ip{X}{P_{q,p_0}\nabla d_p}\leq \tau/2$, and as a consequence 
$\dfy(q,p_0)\neq 0$ as we wanted to show.
\end{proof}


\noindent\textbf{Step 7.} Identify the normal bundle $\nu(M^{p_0})$ with $M^{p_{0}}\times (\{0\}\oplus T_{p_{0}}M)$ via the decomposition  $T(M\times M) = TM \oplus TM$. Consider the section $V(q,p_0)$ of $\nu(M^{p_0})$ that takes the value $(0,\dfy(q,p_0))$ at the point $(q,p_0)$. Observe that $V$ is defined on $M^{p_0}\setminus(p_0, p_0)$, and does not vanish. On a sufficiently small geodesic ball centered at $p_0$, the vector $\dfy(q,p_0)$ is arbitrarily close, as $\rho$ approaches $0$,  to the tangent vector at $p_0$ to the unique geodesic connecting $q$ to $p_0$. 
This, and a simple partition of unity argument, permits us to refine the second component of the section $V$ to agree with the gradient field of the function $\dist_{p_0}^2$
It follows that the index of $V$ at $(p_0,p_0)$  is non-zero (cf.~\cite[Theorem 11.16]{BT}). This is a contradiction, since the normal bundle $\nu(M^{p_0})$ is trivial. 

\hfill $\square$


\begin{rem} A similar statement to Theorem~\ref{T:Smooth} is true if we replace $q$ by a set of $k\geq 2$ distinct points in $M$. Indeed, it is not difficult to find $k\geq 2$ distinct points $q_1,\ldots,q_k$ in $M$ such that $p$ is critical for the distance function to the union of the $q_i$. To find such a collection of points, take a sufficiently small geodesic sphere $S(\delta)$ centered at $p$, with $\delta<\mathrm{inj}(M)$, and let $q_1$ and $q_2$ be antipodal points in $S(\delta)$. We then complete the collection by arbitrarily adding $k-2$ distinct points in $S(\delta)$.
\end{rem}


\begin{rem} The argument used in the proof of Theorem~\ref{T:Smooth} yields the following:


\begin{thm}Let $M$ be a closed Riemannian manifold and let $f:M\times M\rightarrow [0,\infty)$ be a smooth function such that $f(p,q)=0$ if and only if $p=q$. Then, for any point $p\in M$, there exists at least one point $q\in M$ such that $p$ is critical for $f(\cdot,q)$.
\end{thm}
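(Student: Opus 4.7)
The plan is to mimic the proof of Theorem~\ref{T:Smooth}, exploiting the fact that since $f$ is already smooth, the mollification apparatus (Steps~1--6) is unnecessary and one can pass directly to the index-theoretic step. Assume for contradiction that some $p_{0}\in M$ is not critical for $f(\cdot,q)$ for any $q\neq p_{0}$, so that $\nabla_{1}f(p_{0},q)\neq 0$ for every $q\in M\setminus\{p_{0}\}$, where $\nabla_{1}$ denotes the gradient with respect to the first variable. Take the submanifold $M^{p_{0}}=\{p_{0}\}\times M\subset M\times M$; its normal bundle is canonically the trivial bundle $M^{p_{0}}\times T_{p_{0}}M$. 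Define the smooth section $V(p_{0},q)=(\nabla_{1}f(p_{0},q),0)$. Since $f(\cdot,p_{0})$ attains its minimum at $p_{0}$, we have $V(p_{0},p_{0})=0$, and by the standing assumption this is the only zero of $V$ on $M^{p_{0}}$.

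The goal is to show that the index of $V$ at $(p_{0},p_{0})$ is non-zero, contradicting the vanishing of the Euler class of $\nu(M^{p_{0}})$. Writing the Hessian of $f$ at $(p_{0},p_{0})$ with respect to the splitting $T_{p_{0}}M\oplus T_{p_{0}}M$ as $\bigl(\begin{smallmatrix}A & B\\ B^{T} & C\end{smallmatrix}\bigr)$, the identity $f(p,p)\equiv 0$ forces $A+B+B^{T}+C=0$, while the minima of $f(\cdot,p_{0})$ and $f(p_{0},\cdot)$ at $p_{0}$ force $A$ and $C$ to be positive semidefinite, so the symmetric part of $B$ equals $-(A+C)/2$ and is negative semidefinite. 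In normal coordinates centered at $p_{0}$ with $y=\exp_{p_{0}}^{-1}(q)$, this yields the half-space estimate $\ip{V(y)}{y}\leq O(|y|^{3})$, the smooth analogue of what the Grove--Shiohama technique provides in the distance-function case. Following Step~7 of the original proof, I would then refine $V$ via a partition of unity $\phi(|y|)$ inside a sufficiently small geodesic ball around $p_{0}$, so that the resulting section $\tilde V=\phi V+(1-\phi)(-2\exp_{p_{0}}^{-1})$ coincides with $-2\exp_{p_{0}}^{-1}$ on an inner ball; this modified section has an isolated zero of index $(-1)^{n}$ at $(p_{0},p_{0})$, giving the desired contradiction (cf.~\cite[Theorem~11.16]{BT}).

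The step that I would expect to need the most care is checking that $\tilde V$ does not vanish on the transition annulus. Taking inner product with $y$ and using the half-space estimate gives $\ip{\tilde V(y)}{y}\leq |y|^{2}(\phi K|y|-2(1-\phi))$, which is negative as long as $\phi<1/(1+K|y|/2)$; this can be arranged throughout most of the annulus. Near the outer boundary, where $\phi$ is close to $1$ and the previous bound fails, one instead uses that $|V|$ is bounded below by some $m>0$ on the compact annulus (since $V$ is nowhere zero outside $p_{0}$) and that $\tilde V$ is then a small perturbation of $V$, so $\tilde V$ remains non-zero. The constraint that the symmetric part of $B$ is non-positive is essential here: without it $V$ could be a positive scalar multiple of $y$ to leading order, which would defeat the straight-line homotopy to $-2\exp_{p_{0}}^{-1}$.
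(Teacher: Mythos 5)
Your overall strategy is the one the paper has in mind: the paper gives no separate proof of this statement, only the remark that ``the argument used in the proof of Theorem~\ref{T:Smooth} yields'' it, and you correctly observe that for smooth $f$ the mollification of Steps~1--6 is superfluous, that the contradiction hypothesis makes $V(p_0,q)=\nabla_1f(p_0,q)$ a section of the trivial normal bundle vanishing only at $(p_0,p_0)$, and that the vanishing of the Euler class is the right obstruction. Your Hessian computation is also correct; in fact, differentiating the identity $\nabla_1f(y,y)\equiv 0$ along the diagonal gives the stronger conclusion $B=-A$ with $A=\nabla_1^2f(p_0,p_0)\succeq 0$, so $V(y)=-Ay+O(|y|^2)$, and when $A$ is positive definite your argument goes through.

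The gap is in the final step. The estimate $\ip{V(y)}{y}\leq O(|y|^3)$ is not a half-space condition: it allows $\ip{V(y)}{y}$ to be strictly \emph{positive}, merely of cubic or higher order, and then $V$ points outward and cannot be joined to $-2y$ by convex combination without creating zeros. Concretely, in dimension one take $f(x,y)=(x-y)^2(4x+y)^2+(x-y)^4$ near $p_0=0$: this is nonnegative and vanishes exactly on the diagonal, yet $V(y)=\partial_xf(0,y)=2y^3$. Here $\ip{V(y)}{y}=2y^4>0$; the combination $\phi V+(1-\phi)(-2y)$ vanishes wherever $\phi=1/(1+y^2)$, a value in $(0,1)$ that any cutoff sweeping from $0$ on the inner ball to $1$ on the outer boundary must attain; and the local index of $V$ at the origin is $+1=-(-1)^1$, so the asserted value $(-1)^n$ is false in general. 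Your fallback (that $\tilde V$ is a small perturbation of $V$ near the outer boundary) does not close this: it requires the region $\{1-\phi<m/C\}$ to overlap the region $\{\phi<1/(1+K|y|/2)\}$, which forces roughly $m\gtrsim r^2$, whereas in degenerate examples such as the one above $m=\min_{\mathrm{annulus}}|V|$ is only of order $r^3$. What actually needs to be proved is that the degree of $V$ on a small sphere about $p_0$ is nonzero --- not that it equals $(-1)^n$ --- and this requires exploiting the nonnegativity of $f$ beyond the quadratic-order inequality $B+B^{T}\preceq 0$. The distance-function case of Theorem~\ref{T:Smooth} sidesteps all of this because there the section near $(p_0,p_0)$ is uniformly close to the unit radial field pointing away from $q$, which pins the degree down to $(-1)^n$ with no degenerate cases.
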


Pushing farther the argument, one could extend the preceding theorem to the case where $f$ is a Lipschitz function with continuous non-zero gradient close to the diagonal (although not necessarily defined on it).
\end{rem}


\section{Criticality through geodesic loops}
\label{S:Loops}

\begin{proof}[Proof of Theorem~\ref{T:Loops}]
First, observe that if $p$ is critical for $q$, then $q$ is in the cut locus of $p$, that we denote as $C(p)$.  In our case, if $p$ is the midpoint of a geodesic loop then there should be a point $q$ in $C(p)$ and two geodesic segments from $q$ to $p$ whose tangent vectors are antipodal at $p$. Our strategy would be to construct a metric on $M$ such that this condition is not satisfied for any point in $C(p)$.

We will start by assuming  $M$ to be diffeomorphic to the standard sphere $\mathbb{S}^n$; the desired metric will be obtained using a 
construction of Gluck and Singer (see \cite{GS}). They show how to glue geodesic fields in the northern and southern hemispheres along a preasigned diffeomorphism of the equator $\mathbb{S}^{n-1}\subset\mathbb{S}^{n}$  so that in the  metric the glued curves remain geodesics.


\begin{figure}
\begingroup%
  \makeatletter%
  \providecommand\color[2][]{%
    \errmessage{(Inkscape) Color is used for the text in Inkscape, but the package 'color.sty' is not loaded}%
    \renewcommand\color[2][]{}%
  }%
  \providecommand\transparent[1]{%
    \errmessage{(Inkscape) Transparency is used (non-zero) for the text in Inkscape, but the package 'transparent.sty' is not loaded}%
    \renewcommand\transparent[1]{}%
  }%
  \providecommand\rotatebox[2]{#2}%
  \ifx\svgwidth\undefined%
    \setlength{\unitlength}{358.30567673bp}%
    \ifx\svgscale\undefined%
      \relax%
    \else%
      \setlength{\unitlength}{\unitlength * \real{\svgscale}}%
    \fi%
  \else%
    \setlength{\unitlength}{\svgwidth}%
  \fi%
  \global\let\svgwidth\undefined%
  \global\let\svgscale\undefined%
  \makeatother%
  \begin{picture}(1,0.44380261)%
    \put(0,0){\includegraphics[width=\unitlength]{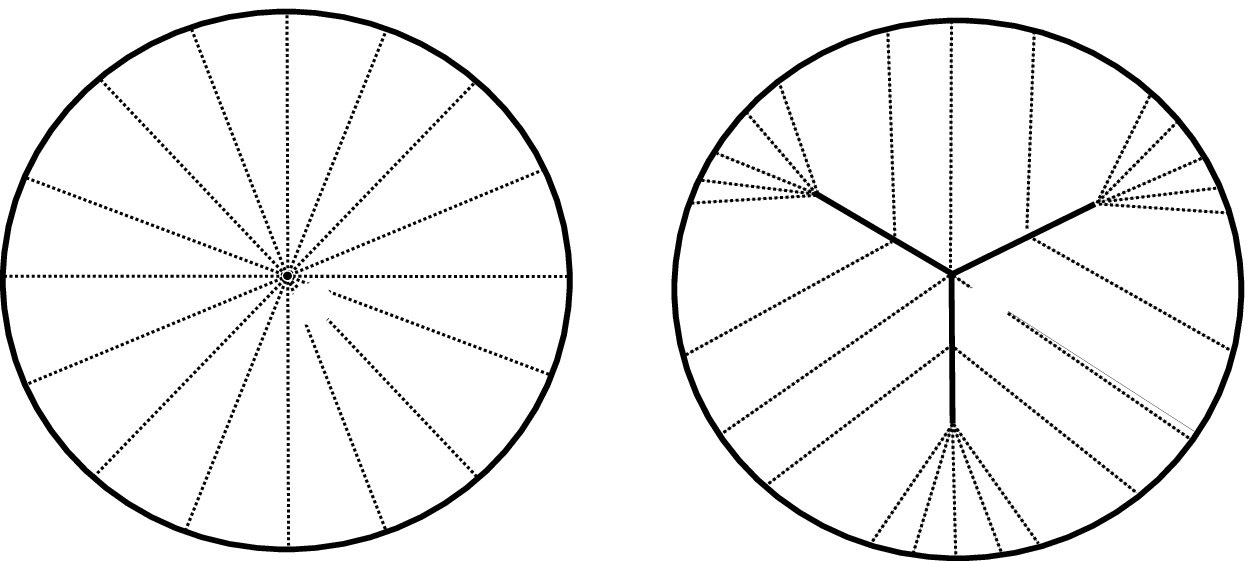}}%
    \put(0.7805629,0.20225655){\color[rgb]{0,0,0}\makebox(0,0)[lb]{$N$}}%
    \put(0.24054607,0.19990857){\color[rgb]{0,0,0}\makebox(0,0)[lb]{$S$}}%
  \end{picture}%
\endgroup%
\caption{Geodesic fields in the southern hemisphere (left) and the northern hemisphere (right).}
\label{F:Fields}
\end{figure}


We consider the fields  shown in Figure~\ref{F:Fields}. The left hand side corresponds to the southern hemisphere, equipped with the standard metric, and the right hand side corresponds to the northern hemisphere, where we consider a tree with three edges along great circles meeting at the north pole and making an angle of $2\pi/3$. 
The dotted lines denote the geodesic fields. We take the gluing diffeomorphism to be the identity. After the gluing, the tree is the cut locus of the south pole $S$.  By construction, this new metric contains no geodesic loops passing through the north pole $N$. 

For a general manifold $M$, take the above metric in $\mathbb{S}^n$ and connect it to $M$ by a narrow tube as in \cite[Section 8, Figure 6]{GS}. \end{proof}

The fact that geodesic loops converge to geodesic loops in the $C^\infty$ topology for the space of Riemannian metrics on a closed Riemannian manifold implies the following corollary to the preceding theorem.


\begin{cor}Let $M$ be a closed smooth manifold. Then there exists an open set  of Riemannian metrics  on $M$ (in the $C^\infty$ topology) such that not every point is the midpoint of a geodesic loop with the property that half the loop is minimal.
\end{cor}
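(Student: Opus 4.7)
The plan is to leverage Theorem~\ref{T:Loops} together with continuous dependence of geodesics and of the distance function on the metric. Let $g_0$ be the metric produced by Theorem~\ref{T:Loops}, and let $p_0 \in M$ be a point that is not the midpoint of any $g_0$-geodesic loop whose first half is minimal. I would show that this failure at $p_0$ persists for every metric in some $C^\infty$-neighborhood of $g_0$; that neighborhood is then the desired open set.

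The argument would proceed by contradiction. Suppose there is a sequence $g_n \to g_0$ in $C^\infty$ such that, for each $g_n$, the point $p_0$ \emph{is} the midpoint of a geodesic loop with minimal first half. This provides, for each $n$, a base point $q_n \in M$ and a $g_n$-geodesic loop $\gamma_n \colon [0, L_n] \to M$ parametrized by $g_n$-arclength with $\gamma_n(0)=\gamma_n(L_n)=q_n$, $\gamma_n(L_n/2) = p_0$, and $L_n/2 = d_{g_n}(q_n, p_0)$. The diameter and injectivity radius of $(M,g_n)$ vary continuously with the metric in the $C^\infty$ topology, and any geodesic loop has length at least twice the injectivity radius (Klingenberg's lemma), so for large $n$ the lengths $L_n$ lie in a fixed interval $[c,\Lambda]$ with $c>0$. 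Passing to subsequences, I would extract $q_n \to q_\ast \in M$, $\gamma_n'(0) \to v_\ast \in T_{q_\ast}M$ (a $g_0$-unit vector), and $L_n \to L_\ast \in [c,\Lambda]$.

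Next I would invoke smooth dependence of the geodesic flow on initial data and on the metric: because $g_n \to g_0$ in $C^\infty$, the solutions $\gamma_n$ converge in $C^\infty$ on $[0,L_\ast]$ to the $g_0$-geodesic $\gamma_\ast$ with $\gamma_\ast(0)=q_\ast$ and $\gamma_\ast'(0)=v_\ast$. Passing to the limit, $\gamma_\ast$ is a nontrivial $g_0$-geodesic loop based at $q_\ast$ with midpoint $p_0$, and joint continuity of the distance function in the metric and in the points yields $d_{g_0}(q_\ast,p_0) = L_\ast/2$, so $\gamma_\ast|_{[0,L_\ast/2]}$ is minimal in $(M,g_0)$. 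This contradicts the choice of $p_0$ and completes the proof.

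The step I expect to require the most care is the uniform lower bound $L_n \geq c > 0$: without it the loops could collapse into an arbitrarily small neighborhood of $p_0$, and their $g_0$-limit would be a constant map carrying no information, yielding no contradiction. Lower semi-continuity of the injectivity radius in $g$ together with Klingenberg's bound rescues this. The remaining ingredients --- compactness of $M$ and of its unit tangent bundle, smooth dependence of geodesics on initial data, and continuity of distance under convergence of metrics --- are standard, and it is precisely the $C^\infty$ hypothesis on convergence of metrics that makes passage of the geodesic equation to the limit routine.
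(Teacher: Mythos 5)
Your proposal is correct and is essentially the paper's argument: the authors justify the corollary in one line by noting that geodesic loops (with minimal half) converge to geodesic loops under $C^\infty$ convergence of metrics, which is exactly the compactness-and-limit argument you carry out in detail, including the key uniform lower bound on loop lengths via the injectivity radius that prevents collapse to a constant loop.
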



\end{document}